\newtheorem{theorem}{Theorem}[section]
\newtheorem{lemma}[theorem]{Lemma}
\newtheorem{proposition}[theorem]{Proposition}
\newtheorem{conjecture}[theorem]{Conjecture}
\theoremstyle{definition}
\newtheorem{definition}[theorem]{Definition}
\newtheorem{example}[theorem]{Example}
\theoremstyle{remark}
\newtheorem{remark}[theorem]{Remark}
\newcommand{\Dcal}{\ensuremath{\mathcal{D}}}
\newcommand{\Ccal}{\ensuremath{\mathcal{C}}}
\newcommand{\Acal}{\ensuremath{\mathcal{A}}}
\newcommand{\Ecal}{\ensuremath{\mathcal{E}}}
\newcommand{\Kbb}{\mathbb{K}}
\newcommand{\ra}{\rightarrow}
\numberwithin{equation}{section}
\begin{document}
\title{Homological embeddings for preprojective algebras}
\author{Frederik Marks}

\address{Frederik Marks, University of Stuttgart, Institute for Algebra and Number Theory, Pfaffenwaldring 57, 70569 Stuttgart, Germany}
\email{marks@mathematik.uni-stuttgart.de}

%\date{\today}
\subjclass[2010]{16E30, 16G20}
\keywords{homological embedding, preprojective algebra, self-injective algebra}
\thanks{The author is grateful to Martin Kalck for helpful discussions and for explaining some of the results in \cite{K}. Moreover, the author thanks Julian K\"ulshammer and Jorge Vit\'oria for useful comments on an earlier version of this text.}

\begin{abstract}
For a fixed finite dimensional algebra $A$, we study representation embeddings of the form $mod(B)\rightarrow mod(A)$. Such an embedding is called homological, if it induces an isomorphism on all Ext-groups and weakly homological, if only Ext$^1$ is preserved. In case $A$ is a preprojective algebra of Dynkin type, we give an explicit classification of all weakly homological and homological embeddings. Furthermore, we show that for self-injective algebras a classification of homological embeddings becomes accessible once these algebras fulfil the Tachikawa conjecture.
\end{abstract}
\maketitle

%%%%%%%%%%%%%%%%%%%%%%%%%%%%%%%%%%%%%%%%%%%%

\section{Introduction}
One way to better understand the representation theory of a given finite dimensional algebra $A$ is to study embeddings of module categories $mod(B)\ra mod(A)$ where $B$ is another finite dimensional algebra. However, a classification of such embeddings is usually too much to ask for. In this article, we focus on studying fully faithful exact functors $mod(B)\ra mod(A)$ that yield isomorphisms either on only the first Ext-groups or, more generally, on all of them. These functors will be called (weakly) homological embeddings. Our main result provides a combinatorial classification of all weakly homological embeddings for preprojective algebras of Dynkin type. Furthermore, we describe explicitly which of these embeddings are homological.

Preprojective algebras of Dynkin type were first introduced and studied in \cite{GP} and they play an important role in modern representation theory, with applications to geometry, Lie theory and cluster algebras (compare, for example, \cite{BKT, CB, GLS, R}). Generally, preprojective algebras are of wild representation type. However, it was shown in \cite{AM} and \cite{Mi} that in the context of (generalised) tilting theory classification results can not only be obtained, but are, in fact, discrete. Here, an important role is played by the Weyl group associated to the graph defining the preprojective algebra. Following this philosophy, we prove the following result.\\

\noindent\textbf{Theorem A} (Theorem \ref{Main 1})

\noindent\textit{Let $A=A_Q$ be a preprojective algebra of Dynkin type. Then there is a 1-1 correspondence between
\begin{enumerate}
\item elements of the Weyl group $W_Q$;
\item equivalence classes of weakly homological embeddings $mod(B)\ra mod(A)$.
\end{enumerate}
Moreover, if $F_w:mod(B)\ra mod(A)$ is a weakly homological embedding corresponding to an element $w$ in $W_Q$, then the algebra $B$ is Morita equivalent to $A/I_v$ for some idempotent ideal $I_v$ associated to an element $v$ in $W_Q$ with $l(wv)=l(w)+l(v)$. In particular, $B$ is Morita equivalent to a direct product of preprojective algebras of Dynkin type.}\\

In order to better understand homological embeddings $mod(B)\ra mod(A)$ for a given finite dimensional and self-injective algebra $A$, it is helpful to know if $A$ fulfils the Tachikawa conjecture, i.e., if every finite dimensional $A$-module $X$ with $Ext_A^i(X,X)=0$ for all $i>0$ is already projective. In this case, a classification of homological embeddings becomes accessible and it is linked to the study of certain triangulated subcategories of the stable module category $\underline{mod}(A)$ (Proposition \ref{4.Prop2}). In this context, we show that blocks of group algebras do not admit any non-trivial homological embeddings (Theorem \ref{Main symmetric}). Finally, we complement Theorem A by obtaining a classification of the homological embeddings for preprojective algebras of Dynkin type.\\

\noindent\textbf{Theorem B} (Theorem \ref{4.Thm.2} and Example \ref{ex hom preproj})

\noindent\textit{Let $A$ be a preprojective algebra of Dynkin type and $F:mod(B)\ra mod(A)$ be a homological embedding that is neither zero nor an equivalence. Then $A$ is of type $\Acal_n$ $({\small{\xymatrix{1\ar@{-}[r] & 2\ar@{-}[r] & \dots\ar@{-}[r] & n}}})$ for $n\ge 2$ and the algebra $B$ is Morita equivalent to $\Kbb$\,.
In fact, for each $n\ge 2$ there are precisely two such choices for $F$, up to equivalence, which correspond to the Weyl group elements $s_{n-1}(s_{n-2}s_{n-1})\cdots (s_1s_2\cdots s_{n-1})$ and $s_n(s_{n-1}s_n)\cdots (s_2s_3\cdots s_n)$, respectively.}\\

Such classification results are relevant to better understand the structure of the derived category of all $A$-modules (see Remark \ref{rem}). In fact, every homological embedding $mod(B)\ra mod(A)$ gives rise to a fully faithful functor between the associated derived categories and, moreover, to a recollement of triangulated categories (see \cite{GL} and \cite{P}).\\

The structure of the paper is as follows. In Section 2, we fix some necessary notation. Section 3 is dedicated to the notion of (weakly) homological embedding and discusses possible ways of classifying them (Proposition \ref{Prop ring epi}). In Section 4, we introduce preprojective algebras of Dynkin type and prove Theorem A. Section 5 deals with the general situation of studying homological embeddings for self-injective algebras and how this problem is related to the Tachikawa conjecture. Finally, Section 6 contains the proof of Theorem B.

%%%%%%%%%%%%%%%%%%%%%%%%%%%%%%%%%%%%%%%%

\section{Notation}
Throughout, $A$ will be a finite dimensional algebra over an algebraically closed field $\Kbb$. The category of all finite dimensional left $A$-modules is denoted by $mod(A)$. All subcategories $\Ccal$ of $mod(A)$ are considered to be full. A subcategory $\Ccal$ of $mod(A)$ is said to be \textbf{functorially finite}, if every $A$-module admits a left and a right $\Ccal$-approximation, i.e., for every $X\in mod(A)$ there are objects $C_1,C_2\in\Ccal$ and morphisms $g_1:X\ra C_1$ and $g_2:C_2\ra X$ such that the maps $Hom_A(g_1,\tilde{C})$ and $Hom_A(\tilde{C},g_2)$ are surjective for all $\tilde{C}\in\Ccal$. A subcategory $\Ccal$ of $mod(A)$ is called \textbf{wide}, if it is closed for kernels, cokernels and extensions in $mod(A)$ and it is said to be a \textbf{torsion class}, if it is closed for quotients and extensions. 
Moreover, for an $A$-module $X$, we denote by $add(X)$ the subcategory of $mod(A)$ consisting of all direct summands of finite direct sums of copies of $X$ and by $gen(X)$ (respectively, $sub(X)$) the subcategory of $mod(A)$ containing all quotients (respectively, all submodules) of finite direct sums of copies of $X$.

Let $\underline{mod}(A)$ be the \textbf{stable module category} defined to be the quotient of $mod(A)$ by the ideal generated by all the homomorphisms that factor through a projective module. By $\pi_A$ we denote the canonical quotient functor $mod(A)\ra \underline{mod}(A)$. It is well-known that in case $A$ is self-injective, $\underline{mod}(A)$ carries the structure of a triangulated category with shift functor $\Omega^{-1}$ (compare \cite{H}), where
$\Omega$ denotes the usual syzygy. Also recall that the Auslander-Reiten translate $\tau$, the Nakayama functor $\nu$ and the syzygy functor $\Omega$ induce autoequivalences on 
$\underline{mod}(A)$ related by the natural isomorphisms of functors $\tau\cong\Omega^2\nu\cong\nu\Omega^2$ and $\tau^{-1}\cong\Omega^{-2}\nu^{-1}\cong\nu^{-1}\Omega^{-2}$ (\cite[IV., Theorem 8.5]{SY}).

%%%%%%%%%%%%%%%%%%%%%%%%%%%%%%%%%%%%%%%%%%

\section{Homological embeddings}

The definition of homological embedding for abelian categories is due to \cite{P}.

\begin{definition}
Let $A$ and $B$ be finite dimensional $\Kbb$-algebras. A fully faithful and exact functor $F:mod(B)\ra mod(A)$ is called a \textbf{(weakly) homological embedding}, if $F$ induces an isomorphism 
$$Ext_B^i(X,Y)\cong Ext_A^i(F(X),F(Y))$$ 
for all $i>0$ (respectively, for $i=1$) and for all $B$-modules $X$ and $Y$. Two (weakly) homological embeddings $F:mod(B)\ra mod(A)$ and $F':mod(B')\ra mod(A)$ are said to be \textbf{equivalent}, if there is  an equivalence of categories $G:mod(B)\ra mod(B')$ making the following diagram commute
$$\xymatrix{mod(B)\ar[rr]^F\ar[dr]_{G} & & mod(A) \\ & mod(B')\ar[ur]_{F'} & }$$
In particular, $B$ is Morita equivalent to $B'$ and the essential image of $F$, denoted by $Im(F)$, coincides with the essential image of $F'$.
\end{definition}

In this article, we are interested in classifying, up to equivalence, all possible (weakly) homological embeddings $mod(B)\ra mod(A)$ for a fixed algebra $A$. Note that without the assumption of having at least an isomorphism on the first Ext-groups this is an almost hopeless task. For example, if $A$ is a strictly wild algebra, then for every finite dimensional algebra $B$ there is a fully faithful exact functor $mod(B)\ra mod(A)$. However, the additional assumption of the functor being (weakly) homological will make classification results accessible.

Recall that a ring homomorphism $A\ra B$ between two finite dimensional $\Kbb$-algebras $A$ and $B$ is called a \textbf{ring epimorphism}, if it is an epimorphism in the category of rings. Moreover, two ring epimorphisms $f:A\ra B$ and $f':A\ra B'$ are said to be \textbf{equivalent}, if there is a (unique) isomorphism of algebras $g:B'\ra B$ making the following diagram commute
$$\xymatrix{A\ar[rr]^{f}\ar[dr]_{f'} & & B\\ & B'\ar[ur]_g &}$$

The next result is helpful to classify equivalence classes of (weakly) homological embeddings.

\begin{proposition}\label{Prop ring epi}
Let $A$ be a finite dimensional $\Kbb$-algebra. There are 1-1 correspondences between
\begin{enumerate}
\item equivalence classes of weakly homological embeddings $mod(B)\ra mod(A)$;
\item functorially finite wide subcategories of $mod(A)$;
\item equivalence classes of ring epimorphisms $A\ra B$ with $Tor_1^A(B,B)=0$.
\end{enumerate}

Moreover, the above correspondence $(1)\leftrightarrow (3)$ restricts to a bijection between
\begin{enumerate}
\item[(1')] equivalence classes of homological embeddings $mod(B)\ra mod(A)$;
\item[(3')] equivalence classes of ring epimorphisms $A\ra B$ with $Tor_i^A(B,B)=0$ for all $i>0$.
\end{enumerate}

\end{proposition}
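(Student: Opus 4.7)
The plan is to deduce all three bijections from Silver's classical theorem that a ring homomorphism $f: A \to B$ is a ring epimorphism if and only if the restriction functor $f_*: mod(B) \to mod(A)$ is fully faithful (it is automatically exact).

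For (1) $\leftrightarrow$ (3), I would start from this correspondence and upgrade to the weakly homological setting by examining the canonical comparison map
$$\varphi_{X,Y}: Ext_A^1(f_* X, f_* Y) \to Ext_B^1(X, Y)$$
for $X, Y \in mod(B)$. Full faithfulness of $f_*$ together with exactness yields that $\varphi_{X,Y}$ is surjective, and a routine computation from a projective presentation of $X$ in $mod(B)$ identifies its kernel with a quotient of $Hom_B(Tor_1^A(B, X), Y)$. Since every $B$-module is a quotient of a free $B$-module and $Tor$ commutes with direct sums, the vanishing of this kernel for all $X, Y$ reduces to the single condition $Tor_1^A(B, B) = 0$. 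Conversely, given a weakly homological embedding $F: mod(B) \to mod(A)$, one sets $B' := End_A(F(B))^{op} \cong B$ and recovers the ring epimorphism $A \to B'$ from the canonical $A$-action on $F(B)$.

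For (1) $\leftrightarrow$ (2), starting from a weakly homological embedding $F$, the essential image $Im(F)$ is closed under kernels and cokernels (which makes it an abelian subcategory of $mod(A)$) and under extensions (from the $Ext^1$-isomorphism), hence wide. Functorial finiteness of $Im(F)$ follows from the existence of a projective generator $F(B) \in Im(F)$: a left $Im(F)$-approximation of any $X \in mod(A)$ is built from the evaluation map with respect to $F(B)$, and right approximations are constructed dually using the injective cogenerator of $mod(B)$ transported through $F$. Conversely, a functorially finite wide subcategory $\Ccal$ admits a projective generator $P \in \Ccal$ obtained from a left $\Ccal$-approximation of $A$, and then $B := End_A(P)^{op}$ is finite dimensional with $Hom_A(P,-): \Ccal \to mod(B)$ an equivalence whose inverse, composed with the inclusion $\Ccal \subseteq mod(A)$, is a weakly homological embedding.

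The restriction (1') $\leftrightarrow$ (3') follows by iterating the analysis in (1) $\leftrightarrow$ (3): dimension-shifting in a projective resolution shows that the degree-$i$ comparison map $Ext_A^i(f_* X, f_* Y) \to Ext_B^i(X, Y)$ is an isomorphism for all $i \leq n$ precisely when $Tor_j^A(B, B) = 0$ for all $1 \leq j \leq n$, and letting $n \to \infty$ gives the claim. I expect the main obstacle to be the functorial finiteness step in (1) $\Rightarrow$ (2), since an arbitrary extension-closed abelian subcategory of $mod(A)$ need not admit approximations; here the finite-dimensionality of $B$ and the special role of $F(B)$ as a projective generator (with its $\Kbb$-dual as an injective cogenerator) of $Im(F)$ are essential.
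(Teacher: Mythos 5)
Your overall strategy is the same as the paper's: identify $Im(F)$ with $mod(B)$ via a projective generator, pass to a ring epimorphism by Silver/Gabriel, and control the failure of the $Ext$-comparison map by the groups $Tor^A_i(B,B)$. The paper, however, compresses precisely these steps by quoting three references — Iyama's rejective-subcategory theorem for $(2)\leftrightarrow(3)$, Schofield for translating extension-closure into $Tor_1$-vanishing, and Geigle--Lenzing for $(1')\leftrightarrow(3')$. What you propose is not a different route so much as an attempt to reprove these ingredients, and several of them are underdeveloped or slightly off.

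Concretely: (i) the comparison map goes the other way: for a ring epimorphism $f\colon A\to B$ the natural transformation is $Ext^1_B(X,Y)\to Ext^1_A(f_*X,f_*Y)$, automatically injective because a splitting of the restricted sequence is $B$-linear by full faithfulness; a map $Ext^1_A\to Ext^1_B$ can be manufactured via $B\otimes_A^{\Lbb}-$, but then the surjectivity/kernel bookkeeping you describe needs to be re-derived to match. (ii) You have left and right approximations swapped: the projective generator $F(B)$ yields the \emph{right} approximation $F(Hom_A(F(B),X))\to X$ (the counit of $F\dashv Hom_A(F(B),-)$), while left approximations come from the injective cogenerator; moreover the existence of a \emph{left} adjoint to $F$, hence of left approximations, is not free and needs an argument (a $\Kbb$-duality trick works, but it must be said) — this is exactly what \cite{I} supplies. (iii) In $(2)\to(1)$ you must take a \emph{minimal} left $\Ccal$-approximation $A\to C$: minimality is what forces any epimorphism onto $C$ inside $\Ccal$ to split, giving Ext-projectivity; a non-minimal approximation does not visibly do this. (iv) "Recovers the ring epimorphism from the canonical $A$-action on $F(B)$" hides a real step: the $A$-action lands in $End_\Kbb(F(B))$, and you must first identify $F(B)$ with the regular right $B$-module using full faithfulness ($End_A(F(B))^{op}\cong End_B(B)^{op}=B$) to obtain a ring map $A\to End_B(F(B))=B$; that this is an epimorphism then follows from Silver once you check restriction along it agrees with $F$. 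None of these is fatal — they are what the cited theorems provide — but as written the sketch does not yet amount to a proof.
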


\begin{proof}
The assignments for the first three bijections are as follows:
$$\begin{array}{ccccl}
\text{Bijection} &&&& \text{Assignment}\\
\hline
 (1)\rightarrow (2) &&&&  F\,\mapsto\; Im(F)\\
 (2)\rightarrow (3) &&&&  W\,\mapsto\; (A\ra End_A^{op}(C)) \text{ with } A\ra C \text{ minimal left W-approximation }\\
 (3)\rightarrow (1) &&&&  (A\ra B)\,\mapsto\; \text{restriction functor } mod(B)\ra mod(A) 
\end{array}$$
Clearly, the essential image of a weakly homological embedding yields a functorially finite wide subcategory of $mod(A)$ and this assignment is injective. The bijection $(2)\ra (3)$ is given by \cite[Theorem 1.6.1(2)]{I} using that $W$ being extension-closed translates to the vanishing condition on $Tor_1^A$ (compare \cite[Theorem 4.8]{Sch}). The inverse $(3)\ra (2)$ factors through $(1)$ by mapping a ring epimorphism $A\ra B$ to the essential image of its restriction functor $F:mod(B)\ra mod(A)$, where $F$ is well-known to be fully faithful and exact. In particular, the assignment $(1)\ra (2)$ is surjective.
Finally, the correspondence $(1')\leftrightarrow (3')$ follows from above, now using \cite[Theorem 4.4]{GL}.
\end{proof}

This proposition shows that equivalence classes of (weakly) homological embeddings always have a \textit{good} representative, namely the restriction functor of an associated ring epimorphism. The following remark collects some easy observations on (weakly) homological embeddings.

\begin{remark}\label{rk1}\hfill
\begin{enumerate}
\item If $A$ is hereditary, then every weakly homological embedding $mod(B)\ra mod(A)$ is already homological. In particular, the algebra $B$ is hereditary.
\item If $A$ is local, then every weakly homological embedding $F:mod(B)\ra mod(A)$ is trivial, i.e., $F$ is either zero or an equivalence. This follows from the fact that $F\not= 0$ already implies, using top-to-socle factorisation, that the unique simple $A$-module $S$ lies in $Im(F)$.
\item If $F:mod(B)\ra mod(A)$ is a weakly homological embedding, the following are equivalent.
\begin{enumerate}
\item $Im(F)$ is closed for subobjects;
\item $Im(F)$ is closed for quotients;
\item $F$ is equivalent to the restriction functor $mod(A/I)\ra mod(A)$ for some $I=I^2\lhd A$.
\end{enumerate}
The only non-trivial implication is $(a),(b)\Rightarrow (c)$ which follows from Proposition \ref{Prop ring epi} using \cite[Lemma 3.1]{M}.
\end{enumerate}
\end{remark}

%%%%%%%%%%%%%%%%%%%%%%%%%%%%%%%%%%%%%%%%%%%%%%

\section{Weakly homological embeddings for preprojective algebras}
In this section, we fix $A$ to be a preprojective algebra of Dynkin type. Let us recall the definition. First, take a Dynkin quiver $Q=(Q_0,Q_1)$ of type $\Acal_n (n\ge 1)$, $\Dcal_n (n\ge 4)$, $\Ecal_6, \Ecal_7$ or $\Ecal_8$ and denote by $\overline{Q}$ its double quiver obtained by adding an arrow $\alpha^*:j\ra i$ for each arrow $\alpha:i\ra j$ in $Q_1$. The \textbf{preprojective algebra} $A=A_Q$ associated to $Q$ is given by the quotient of the path algebra $\Kbb \overline{Q}$ by the ideal $I$ generated by
$$\underset{\alpha\in Q_1}{\sum}(\alpha\alpha^*-\alpha^*\alpha).$$ Since the quiver $Q$ is Dynkin, the algebra $A$ is finite dimensional and self-injective. Moreover, the stable module category $\underline{mod}(A)$ has Calabi-Yau dimension 2, i.e., there is a natural isomorphism of functors $\nu\cong\Omega^{-3}$ (see \cite{ES}). In particular, it follows that $\tau\cong\Omega^{-1}$ and that we have a natural isomorphism $Ext_A^1(M,N)\cong DExt^1_A(N,M)$ for all $A$-modules $M,N$, where $D:=Hom_{\Kbb}(-,\Kbb)$.\\

Now let $Q$ be a Dynkin quiver with $Q_0=\{1,...,n\}$. The Weyl group $W_Q$ associated to $Q$ is defined by the generators $s_1,...,s_n$ and the relations 
\begin{itemize}
\item $s_i^2=1$;
\item $s_is_j=s_js_i$, if there is no edge connecting $i$ and $j$;
\item $s_is_js_i=s_js_is_j$, if there is an edge connecting $i$ and $j$.
\end{itemize}
We say that an expression $w=s_{i_1}\cdot\cdot\cdot s_{i_p}$ is \textbf{reduced}, if $p$ is minimal among all such presentations of $w$ in $W_Q$. We call $l(w):=p$ the \textbf{length} of $w$. Moreover, to every generator $s_i$ of the Weyl group we associate an ideal $I_{i}:=A(1-e_i)A$ of the preprojective algebra $A=A_Q$, where $e_i$ denotes the primitive idempotent corresponding to vertex $i$. The following result will be crucial in our context.

\begin{theorem}[{\cite[Theorem 2.7]{AIR}}, {\cite[Theorem 2.21]{Mi}}]\label{Thm Mizuno}
Let $A=A_Q$ be a preprojective algebra of Dynkin type. Then there is a 1-1 correspondence between
\begin{enumerate}
\item elements of the Weyl group $W_Q$;
\item torsion classes in $mod(A)$
\end{enumerate}
by mapping a reduced expression $w=s_{i_1}\cdot\cdot\cdot s_{i_p}$ to the torsion class $gen(I_w:=I_{i_1}\cdot\cdot\cdot I_{i_p})$.
\end{theorem}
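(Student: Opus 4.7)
My plan is to establish the bijection by constructing the map $w\mapsto gen(I_w)$, verifying well-definedness on reduced expressions, and then proving injectivity and surjectivity via length arguments combined with mutation theory for torsion classes. The first and most technical step is to record the algebraic properties of the ideals $I_i = A(1-e_i)A$ in $A$: using the preprojective relations one shows by explicit computation that $I_i^2 = I_i$, that $I_iI_j = I_jI_i$ whenever vertices $i$ and $j$ are not adjacent in $Q$, and the crucial braid identity $I_iI_jI_i = I_jI_iI_j$ whenever $i$ and $j$ are adjacent. Since $W_Q$ is presented by exactly these relations, a Matsumoto-style argument then guarantees that $I_w := I_{i_1}\cdots I_{i_p}$ depends only on $w\in W_Q$ and not on the chosen reduced expression, so $gen(I_w)$ becomes a well-defined invariant of $w$.

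Next I would verify that $gen(I_w)$ is genuinely a torsion class, i.e.\ additionally closed under extensions. Since $A$ is self-injective, the module $I_w$ has enough homological control so that support $\tau$-tilting theory applies: one shows that, after adjusting projective summands, the module $A/I_w$ together with appropriate projective data is a support $\tau$-tilting $A$-module, and hence $gen(I_w)$ is automatically a functorially finite torsion class. Injectivity is then obtained by induction on length: showing that $l(ws_i)=l(w)+1$ implies $gen(I_{ws_i})\subsetneq gen(I_w)$ allows me to recover $l(w)$ as the combinatorial distance from $mod(A)$ down to $gen(I_w)$ in the Hasse quiver of torsion classes, so distinct Weyl group elements yield distinct torsion classes. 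For surjectivity, I would exploit that $A$ is $\tau$-tilting finite, so every torsion class in $mod(A)$ is functorially finite and admits a theory of left mutations: starting from $\Tcal_e = mod(A)$ one reaches any prescribed torsion class by iterated left mutations, and each such mutation can be identified with multiplication of the current ideal by some $I_i$, producing the required Weyl group element.

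The main obstacle is the braid identity $I_iI_jI_i = I_jI_iI_j$ inside $A$, which is a subtle module-theoretic computation resting on the Calabi-Yau 2 property of $\underline{mod}(A)$ together with the explicit form of the preprojective relations; this is the technical heart of \cite{Mi}. A secondary obstacle lies in the surjectivity step, where one must ensure that every left mutation of $gen(I_w)$ is again of the form $gen(I_{ws_i})$ for some simple reflection $s_i$, so that the map $w\mapsto gen(I_w)$ really exhausts all torsion classes rather than missing some accessible only through more general mutations.
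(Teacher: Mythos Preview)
The paper does not prove this theorem: it is quoted as an established result from \cite{AIR} and \cite{Mi} and then used as a black box in the proof of Theorem~\ref{Main 1}. There is therefore no argument in the present paper to compare your proposal against.

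That said, your sketch is a broadly faithful outline of how the cited references proceed, with one slip worth flagging. It is $I_w$ itself, not $A/I_w$, that is shown in \cite{Mi} to be a basic support $\tau$-tilting $A$-module, and $gen(I_w)$ is then its associated torsion class; your formulation with $A/I_w$ is off. Also, in \cite{Mi} the braid relation $I_iI_jI_i=I_jI_iI_j$ and the identification of left mutation with right multiplication by $I_i$ are obtained together via the tilting-theoretic machinery (building on Iyama--Reiten and Buan--Iyama--Reiten--Scott in the non-Dynkin case), rather than by an independent explicit ideal computation followed by a separate mutation argument; the 2-Calabi-Yau property enters through this route. Your two ``obstacles'' are thus really one, and are resolved simultaneously in the original sources. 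But again, none of this is addressed in the present paper.
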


Now we are able to state the main result of this section.

\begin{theorem}\label{Main 1}
Let $A=A_Q$ be a preprojective algebra of Dynkin type. Then there is a 1-1 correspondence between
\begin{enumerate}
\item elements of the Weyl group $W_Q$;
\item equivalence classes of weakly homological embeddings $mod(B)\ra mod(A)$.
\end{enumerate}
Moreover, if $F_w:mod(B)\ra mod(A)$ is a weakly homological embedding corresponding to an element $w$ in $W_Q$, then the algebra $B$ is Morita equivalent to $A/I_v$ for some element $v$ in $W_Q$ with $l(wv)=l(w)+l(v)$ and such that the ideal $I_v$ is idempotent. In particular, $B$ is Morita equivalent to a direct product of preprojective algebras of Dynkin type.
\end{theorem}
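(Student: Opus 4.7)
The plan is to stitch together Proposition \ref{Prop ring epi} and Theorem \ref{Thm Mizuno} via the bijection between functorially finite wide subcategories and functorially finite torsion classes that becomes available once $A$ is $\tau$-tilting finite. Concretely, Proposition \ref{Prop ring epi} identifies equivalence classes of weakly homological embeddings into $mod(A)$ with functorially finite wide subcategories of $mod(A)$, while Theorem \ref{Thm Mizuno} identifies elements of $W_Q$ with torsion classes of $mod(A)$. Since $W_Q$ is finite, $A$ is $\tau$-tilting finite, so every torsion class is automatically functorially finite (Demonet--Iyama--Jasso). A theorem of Marks--\v{S}\v{t}ov\'{i}\v{c}ek, in a form equivalent to Asai's semibrick correspondence, then gives a bijection between functorially finite wide subcategories and torsion classes, sending $\Wcal\mapsto gen(\Wcal)$. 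Chaining these three bijections yields (1)$\leftrightarrow$(2): each $w\in W_Q$ produces an explicit wide subcategory $\Wcal_w\subseteq mod(A)$ whose smallest containing torsion class is $gen(I_w)$.

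For the Morita equivalence statement, I would first characterise idempotent ideals in the preprojective setting: $I_v=I_v^2$ precisely when $v$ is the longest element $w_0^J$ of a standard parabolic subgroup $W_J\subseteq W_Q$ for some subset $J\subseteq Q_0$, and in this case $A/I_v$ is Morita equivalent to the preprojective algebra $A_{Q|_J}$ of the full sub-Dynkin diagram of $Q$ on $J$. This immediately yields the final sentence of the theorem, since $A_{Q|_J}$ decomposes as a direct product of preprojective algebras of Dynkin type, one factor for each connected component of $Q|_J$. Given $w\in W_Q$ attached to $\Wcal_w$, I would exploit the ideal calculus of Iyama--Reiten--Scott / Buan--Iyama--Reiten--Scott, namely $I_w\cdot I_{v}=I_{wv}$ whenever $l(wv)=l(w)+l(v)$, to select $v=w_0^J$ compatible with $w$ in the sense that $l(wv)=l(w)+l(v)$; a natural choice is to take $J$ maximal subject to this condition.

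The remaining task is then to produce an equivalence $\Wcal_w\simeq mod(A/I_v)$. A natural candidate is to restrict along the ring epimorphism $A\to A/I_v$ and then twist by a tensor or $Hom$ functor associated to the tilting bimodule structure on $I_w$ from Buan--Iyama--Reiten--Scott, and check that the resulting composite lands in $\Wcal_w$. The main obstacle I anticipate is precisely this identification: verifying that the composite is exact, fully faithful, preserves $Ext^1$, and has essential image exactly $\Wcal_w$. It will require the full $\tau$-tilting toolkit of \cite{AIR} together with the structural properties of the ideals $I_w$ (their tilting character and their behaviour under the quotient $A\to A/I_v$), and careful Ext-computations that can be organised using the Calabi--Yau 2 duality $Ext_A^1(M,N)\cong D Ext_A^1(N,M)$ of $\underline{mod}(A)$.
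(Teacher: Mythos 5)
Your first chain of bijections (Weyl group elements $\leftrightarrow$ torsion classes via Theorem~\ref{Thm Mizuno}, torsion classes $\leftrightarrow$ functorially finite wide subcategories via Marks--\v{S}\v{t}ov\'{i}\v{c}ek, wide subcategories $\leftrightarrow$ weakly homological embeddings via Proposition~\ref{Prop ring epi}) is essentially the same route the paper takes, and it works.

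The second half of your proposal, however, leaves a genuine gap and also reverses the logical order in a way that creates new obligations. You propose to first characterise idempotent ideals $I_v$ as those with $v=w_0^J$ the longest element of a parabolic, then pick $v$ with ``$J$ maximal compatible with $w$'', and finally exhibit an equivalence $\Wcal_w\simeq mod(A/I_v)$. The last step is exactly where the substance lies, and you acknowledge you do not know how to carry it out. The paper does not make an a priori choice of $v$ at all: it lets $T_1$ be the cokernel of the minimal left $gen(I_w)$-approximation of $A$, writes $gen(T_1)=gen(I_u)$ with $u=wv$ and $l(u)=l(w)+l(v)$ forced by the weak order, invokes the equivalences of Leclerc and Baumann--Kamnitzer--Tingley to produce an exact equivalence
$$\xymatrix{gen(I_w)\cap I_u^\circ\ar[rr]^{Hom_A(I_w,-)}&&sub(A/I_v),}$$
and only \emph{then} concludes $I_v=I_v^2$ by observing that $sub(A/I_v)$ is extension-closed, so $Tor_1^A(A/I_v,A/I_v)=I_v/I_v^2=0$ by Proposition~\ref{Prop ring epi}. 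In particular the idempotency is a consequence of the classification, not an input to it, and it is established without any combinatorics about parabolic subgroups. Your suggested strategy (``restrict along $A\to A/I_v$, twist by a tilting-bimodule functor, verify exactness/full faithfulness/$Ext^1$ by hand using CY2 duality'') is plausible in spirit but is precisely the content you would need to prove; without Leclerc's and BKT's equivalences, or an explicit substitute, there is nothing to chain. Moreover, your phrase ``take $J$ maximal'' is underspecified: nothing in your argument shows such a $J$ is unique, nor that it recovers the same $v$ the torsion-class geometry forces on you. I would suggest following the paper's order: fix the torsion pair data associated to $w$, derive $v$ from it, obtain the equivalence $mod(B)\simeq sub(A/I_v)$, and then read off the idempotency as a corollary.
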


\begin{proof}
Let $w$ be an element of $W_Q$ with corresponding torsion class $gen(I_w)$. We consider the $gen(I_w)$-approximation sequence
$$\xymatrix{A\ar[r]^\phi & T_0\ar[r] & T_1\ar[r] & 0}$$ where the map $\phi$ is chosen minimal, i.e., every map $h\in End_A(T_0)$ fulfilling $h\circ\phi=\phi$ is already an isomorphism.
Using Theorem \ref{Thm Mizuno} and \cite[Corollary 3.11]{MS}, it follows that the assignment $w\mapsto gen(I_w)\cap T_1^\circ$ yields a bijection between elements of $W_Q$ and functorially finite wide subcategories of $mod(A)$, where $T_1^\circ:=\{X\in mod(A)\mid Hom_A(T_1,X)=0\}$. Therefore, we can use Proposition \ref{Prop ring epi} to show the wanted correspondence between elements of the Weyl group and equivalence classes of weakly homological embeddings $mod(B)\ra mod(A)$. 

Next, we consider the algebra $B$ occurring in $F_w: mod(B)\ra mod(A)$. Note that, associated to $w\in W_Q$, we obtain two torsion pairs, namely $(gen(I_w), I_w^\circ)$ and $(gen(T_1), T_1^\circ)$. Using Theorem \ref{Thm Mizuno}, there is some $u\in W_Q$ such that $gen(T_1)=gen(I_u)$. Consequently, keeping in mind the above assignments, the functor $F_w$ induces an equivalence
$$mod(B)\cong gen(I_w)\cap I_u^\circ.$$
Since the torsion class $gen(I_u)$ embeds into $gen(I_w)$, there is some $v\in W_Q$ such that $u=wv$ and $l(u)=l(w)+l(v)$. A dual version of this statement can be found in \cite[Section 5.2]{L}. In fact, observe that the torsion class $C_w$ in \cite{L} coincides with the $\Kbb$-dual of $I_{w^{-1}}^\circ$ in our notation (see also \cite[Example 5.6.(iii)-(iv)]{BKT}). Therefore, by \cite[Proposition 5.16 and Example 5.13]{BKT}, we obtain an equivalence of categories
$$\xymatrix{gen(I_w)\cap I_u^\circ\ar[rr]^{\!\!\!Hom_A(I_w,-)} & & I_v^\circ=sub(A/I_v).}$$
Moreover, since the module $I_w$ is Ext-projective in $gen(I_w)$ (compare \cite[Theorem 2.2]{Mi}), the above functor $Hom_A(I_w,-)$ is exact.
It follows that composition of $F_w$ and $Hom_A(I_w,-)$ yields an equivalence $mod(B)\cong sub(A/I_v)$ that preserves exact sequences. Consequently, $sub(A/I_v)$ is also closed for quotients in $mod(A)$ and, thus, it is equivalent to $mod(A/I_v)$. This proves that the algebra $B$ is Morita equivalent to $A/I_v$ and that both algebras are necessarily self-injective.

It remains to show that the ideal $I_v$ is idempotent and, therefore, $B$ is Morita equivalent to a direct product of preprojective algebras of Dynkin type. Since $sub(A/I_v)$ is a torsion-free class in $mod(A)$, it is, in particular, closed for extensions and, therefore (see above), it is a functorially finite and wide subcategory of $mod(A)$. Using Proposition \ref{Prop ring epi}, it follows that $$0=Tor_1^A(A/I_v,A/I_v)=I_v/I_v^2.$$ Thus, there is some idempotent $e\in A$ such that $I_v=AeA$. Now the claim follows by observing that every idempotent quotient of $A$ (obtained by deleting certain vertices in the quiver of the underlying bound path algebra) is again a direct product of preprojective algebras.
\end{proof}

\begin{remark}
Note that, for a preprojective algebra $A=A_Q$ of Dynkin type and an element $v$ in $W_Q$, in general, the ideal $I_v$ is not idempotent and the quotient $A/I_v$ is not isomorphic to a direct product of preprojective algebras. In fact, the arguments used in the proof of Theorem \ref{Main 1} show that the ideal $I_v$ is idempotent if and only if the algebra $A/I_v$ is self-injective.
\end{remark}

We illustrate Theorem \ref{Main 1} in the following example.

\begin{example}\label{ex A3}
Let $A$ be the preprojective algebra of type $\Acal_3$ $({\small{\xymatrix{1\ar@{-}[r] & 2\ar@{-}[r] & 3}}})$. Note that $A$ is representation-finite and its Auslander-Reiten quiver is given by
$$\xymatrix{P_1\ar[dr] & & & & & & P_3\\ & M_{12}\ar[dr] & & S_3\ar[dr] & & M_{21}\ar[ur]\ar[dr] & \\ S_2\ar[ur]\ar[dr] & & M\ar[ur]\ar[r]\ar[dr] & P_2\ar[r] & W\ar[ur]\ar[dr] & & S_2 \\ & M_{32}\ar[ur] & & S_1\ar[ur] & & M_{23}\ar[ur]\ar[dr] & \\ P_3\ar[ur] & & & & & & P_1}$$
The Weyl group $W_{\Acal_3}$ is known to be isomorphic to the symmetric group $\Sigma_4$. An isomorphism is, for example, given by the assignment $s_1\mapsto (34)$, $s_2\mapsto (23)$ and $s_3\mapsto (12)$. The following table lists all elements of the Weyl group, their associated torsion classes in $mod(A)$ and their corresponding equivalence classes of weakly homological embeddings $F:mod(B)\ra mod(A)$ (indicated by the essential image of $F$). Finally, it includes a representative of the Morita class of $B$. 

\vspace*{0.2cm}\center
\begin{tabular}{|c|c|c|c|}
\textbf{elements of $\Sigma_4$} & \textbf{torsion classes} & \textbf{wide subcategories} & \textbf{Morita class} \\ \hline $1_{\Sigma_4}$ & $mod(A) $ & $mod(A)$ & type $\Acal_3$ \\ $(12)$ & $gen(P_1\oplus P_2\oplus M_{21})$ & $add(P_1\oplus S_1\oplus W\oplus M_{23})$ & type $\Acal_2$ \\ $(13)$ & $gen(P_1\oplus M_{12}\oplus S_1)$ & $add(P_1)$ & $\Kbb$ \\ $(14)$ & $add(S_2)$ & $add(S_2)$ & $\Kbb$ \\ $(23)$ & $gen(P_1\oplus P_3\oplus M)$ & $add(M_{12}\oplus M_{32})$ & $\Kbb\times\Kbb$ \\ $(24)$ & $gen(P_3\oplus M_{32}\oplus S_3)$ & $add(P_3)$ & $\Kbb$ \\ $(34)$  & $gen(P_3\oplus P_2\oplus M_{23})$ & $add(P_3\oplus S_3\oplus M_{21}\oplus W)$ & type $\Acal_2$ \\ $(12)(34)$ & $gen(P_2\oplus M_{21}\oplus M_{23})$ & $add(W)$ & $\Kbb$ \\ $(13)(24)$ & $add(S_1\oplus S_3)$ & $add(S_1\oplus S_3)$ & $\Kbb\times\Kbb$ \\ $(14)(23)$ & $\{0\}$ & $\{0\}$ & $0$ \\ $(123)$ & $gen(P_1\oplus M_{21}\oplus M_{12})$ & $add(P_1\oplus S_2)$ & $\Kbb\times\Kbb$ \\ $(124)$ & $gen(M_{21}\oplus S_2)$ & $add(M_{21})$ & $\Kbb$ \\ $(132)$ & $gen(P_1\oplus M\oplus S_1)$ & $add(P_1\oplus S_3\oplus M_{12}\oplus M)$ & type $\Acal_2$ \\ $(134)$ & $gen(M_{12}\oplus S_1)$ & $add(M_{12})$ & $\Kbb$ \\ $(142)$ & $gen(M_{32}\oplus S_3)$ & $add(M_{32})$ & $\Kbb$ \\ $(143)$ & $gen(M_{23}\oplus S_2)$ & $add(M_{23})$ & $\Kbb$ \\ $(234)$ & $gen(P_3\oplus S_3\oplus M)$ & $add(P_3\oplus S_1\oplus M_{32}\oplus M)$ & type $\Acal_2$ \\ $(243)$ & $gen(P_3\oplus M_{23}\oplus M_{32})$ & $add(P_3\oplus S_2)$ & $\Kbb\times\Kbb$ \\ $(1234)$ & $gen(M_{12}\oplus M_{21})$ & $add(S_1\oplus S_2\oplus M_{12}\oplus M_{21})$ & type $\Acal_2$ \\ $(1243)$ & $gen(M_{21}\oplus M_{23}\oplus S_2)$ & $add(M_{21}\oplus M_{23})$ & $\Kbb\times\Kbb$ \\ $(1324)$ & $add(S_1)$ & $add(S_1)$ & $\Kbb$ \\ $(1342) $ & $gen(M\oplus S_1\oplus S_3)$ & $add(M)$ & $\Kbb$ \\ $(1423)$  & $add(S_3)$ & $add(S_3)$ & $\Kbb$  \\ $(1432)$ & $gen(M_{32}\oplus M_{23})$ & $add(S_2\oplus S_3\oplus M_{32}\oplus M_{23})$ & type $\Acal_2$  
\end{tabular}
\end{example}
\vspace{0.2cm}

\begin{remark}
The bijection established in Theorem \ref{Main 1} can be put into a larger context by comparing it to further classification results in which the Weyl group $W_Q$ ($Q$ Dynkin) plays a prominent role (see \cite[Theorem 4.1]{Mi}).
It was shown in \cite{ORT} that there is a one-to-one correspondence between the elements of $W_Q$ and the additive quotient-closed subcategories of $mod(\Kbb Q)$. The bijection is given by mapping an element $w$ in $W_Q$ to the full subcategory $add(\Theta(I_w))$ of $mod(\Kbb Q)$ where 
$$\Theta:mod(A_Q)\ra mod(\Kbb Q)$$ 
denotes the restriction functor induced by the natural inclusion $\Kbb Q\ra A_Q$. Recall that $\Theta$ is always faithful and dense. It is, moreover, worth mentioning that the functor $\Theta$ is less useful when directly applied to a weakly homological embedding $F:mod(B)\ra mod(A_Q)$. It turns out that the full subcategory $add(\Theta(F(B)))$ of $mod(\Kbb Q)$ is, in general, neither closed for kernels or cokernels nor for extensions. Moreover, there can be non-equivalent weakly homological embeddings $F$ and $F'$ with $add(\Theta(F(B)))=add(\Theta(F'(B')))$ (compare Example \ref{ex A3}). 

Nevertheless, the functor $\Theta$ can be used to construct examples of weakly homological embeddings. Since $\Theta$ is exact, the preimage of every wide subcategory in $mod(\Kbb Q)$ under $\Theta$ is also wide and, by \cite[Corollary 3.11]{MS}, functorially finite in $mod(A_Q)$. Hence, there is an induced weakly homological embedding following Proposition \ref{Prop ring epi}. However, it can be checked that, in general, not all weakly homological embeddings $mod(B)\ra mod(A_Q)$ arise in this way (even if we run through all possible orientations of the quiver $Q$).
\end{remark}

%%%%%%%%%%%%%%%%%%%%%%%%%%%%%%%%%%%%%%%%%%%%%%%%%%

\section{Homological embeddings and the Tachikawa conjecture}
This section is an intermediate step towards the classification of homological embeddings for preprojective algebras. In fact, the ideas discussed here will be essential to prove Theorem \ref{4.Thm.2}.
Throughout, $A$ will be a self-injective algebra and we study the relationship between homological embeddings $mod(B)\ra mod(A)$ and the Tachikawa conjecture (see \cite[Section 8]{T}).

\begin{conjecture}[Tachikawa]
Let $A$ be a self-injective algebra and let $M$ be a finite dimensional $A$-module. If $Ext_A^i(M,M)=0$ for all $i>0$, then $M$ is projective.
\end{conjecture}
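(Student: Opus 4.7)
The statement is the well-known Tachikawa conjecture, which remains open in general, so I will describe an approach that succeeds in structured special cases rather than a complete proof. The natural reformulation is that, for a self-injective $A$, the stable category $\underline{mod}(A)$ is triangulated with shift $\Sigma=\Omega^{-1}$ and one has $Ext^i_A(M,M) \cong \underline{Hom}_A(M, \Omega^{-i}M)$ for every $i \ge 1$. Hence the hypothesis says that $\underline{Hom}_A(M,\Sigma^i M) = 0$ for all $i>0$, and the goal is to show that $M\cong 0$ in $\underline{mod}(A)$, i.e.\ that $M$ is projective.

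My plan is to exploit the generalised Serre duality on $\underline{mod}(A)$. Combining the Auslander--Reiten formula $D\underline{Hom}_A(X,\Sigma Y) \cong \underline{Hom}_A(Y,\tau X)$ with the relation $\tau \cong \Omega^2\nu$ recalled in Section~2 gives a Serre functor $S \cong \Omega\nu$, that is, $\underline{Hom}_A(X,Y)\cong D\underline{Hom}_A(Y,\Omega\nu X)$. Applied to $X=Y=M$, this translates the positive-degree vanishing hypothesis into vanishing of $\underline{Hom}_A(M,\Omega^{j}\nu M)$ in a complementary range. When $\underline{mod}(A)$ is Calabi--Yau of dimension $d$, so $\nu\cong\Omega^{-d-1}$, the duality collapses to $\underline{Hom}_A(X,Y) \cong D\underline{Hom}_A(Y,\Omega^{-d}X)$, and setting $X=Y=M$ yields $\underline{End}_A(M) \cong DExt^d_A(M,M) = 0$; thus $\mathrm{id}_M$ factors through a projective and $M$ is projective. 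This settles Tachikawa in all Calabi--Yau self-injective cases, in particular for preprojective algebras of Dynkin type ($d=2$), which is the only case we actually need in the sequel.

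For a general self-injective $A$ my plan would be to reduce to indecomposable $M$, use the Auslander--Reiten triangle ending at $M$ to produce a nonsplit element of $Ext^1_A(M,\tau M)$, and then try to propagate vanishing between $\tau$- and $\Omega$-shifts via $\tau \cong \Omega^2\nu$ together with finiteness information about the $\nu$-orbit of $M$ (a complexity or periodicity argument). The main obstacle, and the reason the conjecture has resisted proof for over fifty years, is precisely that outside the Calabi--Yau or (weakly) symmetric worlds the interaction between $\Omega$ and $\nu$ is not rigid enough to force uniform control of $\underline{Hom}_A(M,\Omega^{-i}M)$ from self-Ext data alone. A realistic proposal therefore only establishes the conjecture in families with additional symmetry (Calabi--Yau stable categories, Nakayama algebras, algebras of finite representation type, certain symmetric algebras), and in the rest of this paper the general statement is kept as a standing hypothesis whenever it is needed.
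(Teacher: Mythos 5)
The statement you were given is an open conjecture, and both you and the paper treat it as such: the paper states it in a \texttt{conjecture} environment with no proof, and then records partial results. Your write-up correctly identifies this and concentrates on the special cases that actually matter downstream, so there is nothing to ``fix''; the useful comparison is between the two partial arguments.

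The paper's relevant partial result is Lemma \ref{4.Lem2}(1): if $A$ is a periodic algebra (as preprojective algebras of Dynkin type are, by {\cite[IV., Theorem 14.1]{SY}}), then every indecomposable non-projective $M$ satisfies $\Omega^d(M)\cong M$ for some $d>0$, whence $Ext_A^d(M,M)\cong\underline{\Hom}_A(\Omega^d M, M)\cong\underline{\End}_A(M)\neq 0$. Your route is genuinely different: you deduce the Serre functor $S\cong\Omega\nu$ on $\underline{mod}(A)$ from the Auslander--Reiten formula and $\tau\cong\Omega^2\nu$, and then, under the $d$-Calabi--Yau hypothesis $\nu\cong\Omega^{-d-1}$, conclude $\underline{\End}_A(M)\cong D\,Ext_A^d(M,M)$; vanishing of $Ext_A^d(M,M)$ then forces $M$ to be projective. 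This is correct, with the tacit restriction $d\geq 1$ (for $d\leq 0$, e.g.\ symmetric algebras with $S\cong\Omega$, the identity $\underline{\End}_A(M)\cong D\underline{\Hom}_A(M,\Omega^{-d}M)$ no longer refers to positive-degree self-extensions and the argument yields nothing). For preprojective algebras of Dynkin type $d=2$, so your argument covers the case needed, just as periodicity does.

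What each approach buys: the periodicity argument is more elementary, needing only $Ext_A^d(M,M)\cong\underline{\Hom}_A(\Omega^d M,M)$, and it is what the paper actually invokes later (in the proof of Theorem \ref{4.Thm.2}); it also meshes with the other known cases (group algebras, finite representation type, radical-cube-zero). Your Calabi--Yau argument is structurally cleaner when it applies and gives a one-line proof from Serre duality, but the two hypotheses are logically incomparable: periodic algebras need not be $d$-CY with $d\geq1$ (symmetric algebras give $S\cong\Omega$), and a $d$-CY self-injective algebra need not a priori be periodic. For the paper's intended application both hold, so either could have been used. Your concluding remarks on how one might attack the general case (AR triangles, propagating vanishing along $\tau$- and $\Omega$-orbits) are honest about being speculative and do not claim more than they deliver, which is appropriate for an open conjecture.
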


There are several affirmative answers to the conjecture. Namely, it is known to hold for
\begin{itemize}
\item group algebras of finite groups (\cite[Chapter 3]{S});
\item self-injective algebras of finite representation type (\cite[Chapter 3]{S});
\item symmetric algebras with radical cube zero (\cite[Theorem 3.1]{Ho});
\item local self-injective algebras with radical cube zero (\cite[Theorem 3.4]{Ho}).
\end{itemize}

Recall that an algebra $A$ is called \textbf{periodic}, if $A$ is a periodic $A\mbox{-}A$-bimodule with respect to the syzygy $\Omega$. In this case, $A$ is a self-injective algebra and also every left $A$-module without projective direct summands is periodic with respect to $\Omega$. 

\begin{lemma}\label{4.Lem2}
Let $A$ be a self-injective algebra.
\begin{enumerate}
\item[(1)] If $A$ is periodic, then $A$ fulfils the Tachikawa conjecture.
\item[(2)] Let $B$ be a self-injective algebra such that $\underline{mod}(B)$ is triangle equivalent to $\underline{mod}(A)$. Then $B$ fulfils the Tachikawa conjecture if and only if so does $A$.
\end{enumerate}
\end{lemma}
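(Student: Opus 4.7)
My plan is to recast the Tachikawa conjecture as a vanishing condition in the stable module category $\underline{mod}(A)$, after which both parts follow almost immediately. First, for any self-injective algebra $A$ and modules $M,N$ without projective summands, I would use the standard identifications
$$Ext^i_A(M,N)\cong\underline{Hom}_A(\Omega^iM,N)\cong\underline{Hom}_A(M,\Omega^{-i}N)\quad (i\ge 1)$$
to rephrase Tachikawa as follows: every nonzero object $M$ of $\underline{mod}(A)$ satisfies $\underline{Hom}_A(M,\Omega^{-i}M)\ne 0$ for some $i>0$. The routine passage from modules to stable modules uses that, for a module $M'$ with no projective direct summands, vanishing of $\underline{End}_A(M')$ forces $1_{M'}$ to factor through a projective, hence $M'$ is a summand of a projective and therefore zero in $\underline{mod}(A)$; an arbitrary module with the Ext-vanishing splits as $M'\oplus P$ with $P$ projective and $M'$ free of projective summands, and $P$ does not affect the Ext-groups.

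For part (1), if $A$ is periodic then every nonzero object $M'\in\underline{mod}(A)$ is periodic, so $\Omega^nM'\cong M'$ for some $n\ge 1$. Applying the isomorphism above to $Ext^n_A(M',M')=0$ and using $\Omega^nM'\cong M'$ yields $\underline{End}_A(M')=0$, and the reformulation from the first paragraph forces $M'=0$ in $\underline{mod}(A)$; equivalently, the original module is projective.

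For part (2), a triangle equivalence $\Phi:\underline{mod}(B)\to\underline{mod}(A)$ is by definition equipped with a natural isomorphism intertwining the shift functors on the two sides, which are $\Omega^{-1}_B$ and $\Omega^{-1}_A$ since $A$ and $B$ are self-injective. Thus $\Phi$ preserves the condition ``$\underline{Hom}(M,\Omega^{-i}M)=0$ for all $i>0$'' (as well as nontriviality in the stable category), so the reformulation of Tachikawa given in the first paragraph transports from $B$ to $A$ and back.

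I do not foresee any serious obstacle here; the only care-worthy point is the clean reformulation of Tachikawa inside $\underline{mod}(A)$, together with the harmless reduction to modules without projective summands, after which part (1) uses nothing more than $\Omega$-periodicity and part (2) uses nothing more than the compatibility of a triangle equivalence with the shift.
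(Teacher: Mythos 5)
Your proposal is correct and takes essentially the same route as the paper: both arguments rest on the stable-category identification $Ext^i_A(M,N)\cong\underline{Hom}_A(\Omega^i M,N)$, use periodicity to deduce $Ext^d_A(M,M)\cong\underline{End}_A(M)\neq 0$ for a non-projective indecomposable $M$ in part (1), and transport this via the compatibility of a triangle equivalence with $\Omega$ in part (2). The only cosmetic difference is that you phrase the reduction via the contrapositive (a reformulation of Tachikawa inside $\underline{mod}(A)$) rather than picking an indecomposable non-projective $M$ and exhibiting a nonvanishing Ext-group directly.
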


\begin{proof}
We first show (1). Let $M$ be an indecomposable non-projective module in $mod(A)$ and take $d>0$ such that $\Omega^d(M)=M$. It suffices to check that $Ext_A^d(M,M)\not= 0$. But this follows from \cite[IV., Theorem 9.6]{SY} using the $\Kbb$-linear isomorphism
$$0\not=\underline{Hom}_A(M,M)=\underline{Hom}_A(\Omega^d(M),M)\cong Ext_A^d(M,M).$$

We now prove (2). Denote by $\psi:\underline{mod}(B)\ra \underline{mod}(A)$ the triangle equivalence between the two stable module categories. Since statement (2) is symmetric, we only prove one implication. Assume that the algebra $A$ fulfils the Tachikawa conjecture. Let $N$ be an indecomposable non-projective $B$-module. By assumption, we know that there is some $d>0$ such that
$$Ext_A^d(\psi(N),\psi(N))\not= 0.$$
Using the same $\mathbb{K}$-linear isomorphism as in (1), it follows that
$$0\not=Ext_A^d(\psi(N),\psi(N))\cong\underline{Hom}_A(\Omega_A^d(\psi(N)),\psi(N))\cong\underline{Hom}_B(\Omega_B^d(N),N)\cong Ext_B^d(N,N).$$
Consequently, also $B$ fulfils the Tachikawa conjecture.
\end{proof}

Next, we relate the Tachikawa conjecture to the study of homological embeddings.

\begin{proposition}\label{4.Prop2}
Let $A$ be a self-injective algebra that fulfils the Tachikawa conjecture and let $F:mod(B)\ra mod(A)$ be a fully faithful exact functor. Then the following are equivalent.
\begin{enumerate}
\item[(1)] $F$ is homological;
\item[(2)] $F(B)$ is a projective $A$-module;
\item[(3)] $Im(F)$ is closed under syzygies in $mod(A)$ ($X\in Im(F)\Rightarrow\Omega_A^s(X)\in Im(F)$ for all $s\in\mathbb{Z}$);
\item[(4)] $B$ is a self-injective algebra and $F$ induces a fully faithful triangle functor $F_{\Delta}$ making the following diagram commute 
$$\xymatrix{mod(B)\ar[r]^{F}\ar[d]_{\pi_B} & mod(A)\ar[d]^{\pi_A}\\\underline{mod}(B)\ar[r]^{F_{\Delta}} & \underline{mod}(A)}$$
In particular, $\pi_A(Im(F))$ is a triangulated subcategory of $\underline{mod}(A)$.
\end{enumerate}
Moreover, if these conditions are satisfied, then also $B$ fulfils the Tachikawa conjecture. 
\end{proposition}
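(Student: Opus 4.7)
The plan is to organize the equivalences as $(1)\Leftrightarrow(2)$, then $(2)\Leftrightarrow(3)$ and $(2)\Leftrightarrow(4)$, and to deduce the Tachikawa assertion for $B$ at the end by exploiting the fully faithful triangle functor provided by $(4)$.

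For $(1)\Leftrightarrow(2)$: the implication $(1)\Rightarrow(2)$ is immediate from $\Ext_A^i(F(B),F(B))\cong\Ext_B^i(B,B)=0$ for all $i>0$ together with the Tachikawa hypothesis on $A$. Conversely, $(2)$ implies that $F$ preserves projectives (every projective $B$-module is a summand of $B^n$, and $F(B)^n\cong F(B^n)$), so $F$ sends projective resolutions to projective resolutions and the Ext-isomorphism at every level follows. The same observation yields $(2)\Rightarrow(3)$: one has $\Omega_A^s F(X)\cong F(\Omega_B^s X)$ up to projective summands for $s\ge 0$, and analogously via injective envelopes for $s<0$ once $B$ is known to be self-injective. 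Self-injectivity of $B$ already follows from $(2)$: since $A$ is self-injective, $F(B)$ is also injective, so $\Hom_A(-,F(B))$ is exact, and via $F$ fully faithful and exact this transfers to exactness of $\Hom_B(-,B)$. For $(2)\Rightarrow(4)$, $F$ preserves projectives and is exact, so it descends to a triangle functor $F_\Delta$ fitting into the required square; the delicate point is the full faithfulness of $F_\Delta$. For this, one observes that the projective cover $P_A(F(Y))$ is a direct summand of the projective module $F(P_B(Y))$ and, since $\Img(F)$ is closed under direct summands (Krull--Schmidt), $P_A(F(Y))=F(Y')$ for some $Y'\in mod(B)$; then $F$ exact and fully faithful forces $Y'$ to be projective in $mod(B)$, so any factorisation of $F(f)$ through an $A$-projective can be rerouted through $F(Y')$ and pulled back to a factorisation of $f$ through $Y'$. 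The implication $(4)\Rightarrow(2)$ is short: $B$ is projective over itself, so $\pi_B(B)=0$, whence $\pi_A(F(B))=F_\Delta(\pi_B(B))=0$ gives $(2)$.

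The implication $(3)\Rightarrow(2)$ is the main obstacle. The plan is a Schanuel-type comparison of two presentations of $F(X)$ in $mod(A)$: applying $F$ to the projective cover $0\to\Omega_B X\to P_B(X)\to X\to 0$ in $mod(B)$ produces one short exact sequence, while the intrinsic projective cover $0\to\Omega_A F(X)\to P_A(F(X))\to F(X)\to 0$ in $mod(A)$ produces a second one. Taking the pullback along the common quotient $F(X)$ and using that the pullback splits on the projective summand $P_A(F(X))$ should produce an exact sequence $0\to\Omega_A F(X)\to F(\Omega_B X)\oplus P_A(F(X))\to F(P_B(X))\to 0$ in $mod(A)$ with outer terms in $\Img(F)$ by $(3)$. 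Combining this with closure of $\Img(F)$ under direct summands will identify $P_A(F(X))$ as $F(\tilde P)$ for some $\tilde P\in mod(B)$, which is then projective by the same exact + fully-faithful argument used in the full-faithfulness step above. Specialising to $X=B$, the lifted sequence $0\to Q\to\tilde P\to B\to 0$ splits (since $B$ is projective), and applying $F$ exhibits $F(B)$ as a direct summand of the projective $A$-module $F(\tilde P)=P_A(F(B))$, yielding $(2)$.

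Finally, for the Tachikawa assertion for $B$: if $N\in mod(B)$ satisfies $\Ext_B^i(N,N)=0$ for all $i>0$, then the already-proved equivalence $(1)$ lifts this to $\Ext_A^i(F(N),F(N))=0$, so the Tachikawa conjecture for $A$ forces $F(N)$ to be projective; consequently $F_\Delta(\pi_B(N))=\pi_A(F(N))=0$ in $\underline{mod}(A)$, and full faithfulness of $F_\Delta$ from $(4)$ forces $\pi_B(N)=0$, i.e.\ $N$ is projective in $mod(B)$. The recurring technical issue across the whole argument is controlling the interaction between projective covers in $mod(A)$ and projective covers in $mod(B)$, which drives both the full-faithfulness step of $(2)\Rightarrow(4)$ and the Schanuel step of $(3)\Rightarrow(2)$.
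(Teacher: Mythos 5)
Most of your proposal matches the paper's proof in substance: $(1)\Leftrightarrow(2)$, $(4)\Rightarrow(2)$, the construction of $F_\Delta$, and the final Tachikawa statement for $B$ are essentially the paper's arguments. Your full-faithfulness argument for $F_\Delta$, rerouting a factorisation through the projective cover $P_A(F(Y))=F(Y')$ with $Y'$ projective in $mod(B)$, is a clean substitute for the paper's use of a minimal left $Im(F)$-approximation of $P$, and is arguably more self-contained. Your $(2)\Rightarrow(3)$ is fine once ``up to projective summands'' is sharpened: since $\Omega_A F(X)$ (taken via the projective cover, $A$ self-injective) has no projective direct summands, Krull--Schmidt makes it a direct summand of $F(\Omega_B X)$ itself, hence it lies in $Im(F)$ because $Im(F)$ is closed under summands; similarly for cosyzygies after establishing that $B$ is self-injective, as you do.

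The genuine gap is exactly the step you flagged, $(3)\Rightarrow(2)$. From the sequence $0\to\Omega_A F(X)\to F(\Omega_B X)\oplus P_A(F(X))\to F(P_B(X))\to 0$ with outer terms in $Im(F)$ you cannot conclude that the middle term, or its summand $P_A(F(X))$, lies in $Im(F)$: that is extension-closure of $Im(F)$, which a merely fully faithful exact functor does not provide, and closure under direct summands only helps once the ambient module is known to be in $Im(F)$. In fact the implication $(3)\Rightarrow(2)$ is false at this level of generality: take $A=\Kbb[x]/(x^2)$ (local self-injective with radical cube zero, so it fulfils the Tachikawa conjecture) and let $F:mod(\Kbb)\to mod(A)$ be restriction along $A\to A/rad(A)$. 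Then $F$ is fully faithful and exact, $Im(F)=add(K)$ for the simple module $K$, and $\Omega_A^s(K)\cong K$ for all $s\in\mathbb{Z}$, so $(3)$ holds; but $F(\Kbb)=K$ is not projective and $\Ext^1_A(K,K)\neq 0$, so $(2)$ and $(1)$ fail. In your scheme, with $X=B=\Kbb$ the sequence is $0\to K\to A\to K\to 0$ and $P_A(F(B))=A\notin Im(F)$, so no Schanuel-type bookkeeping can rescue the step. The implication does hold, and your pullback argument goes through verbatim, if one additionally assumes $Im(F)$ extension-closed (e.g.\ $F$ weakly homological), which is the situation in which the proposition is applied later in the paper; note that the paper's own proof uses the same unjustified extension-closure when it asserts that $Im(F)$ is ``closed for kernels, cokernels and extensions'', so this hypothesis needs to be added there as well, while the remaining equivalences $(1)\Leftrightarrow(2)\Leftrightarrow(4)$ and $(2)\Rightarrow(3)$ are unaffected.
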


\begin{proof}
First assume that (1) holds. Since $F$ is homological, we know that 
$$Ext_A^i(F(B),F(B))\cong Ext_B^i(B,B)=0$$ 
for all $i>0$. Consequently, by assumption on $A$, the module $F(B)$ is projective. 
Next, note that if $F(B)$ is a projective $A$-module, it is also an injective $A$-module. In particular, since $F$ is fully faithful and exact, $B$ is an injective $B$-module and, hence, the algebra $B$ is self-injective.
Moreover, it follows that condition (2) is equivalent to stating that the category $Im(F)$ is closed under taking projective covers and injective envelopes in $mod(A)$. Since $Im(F)$ is also closed for kernels, cokernels and extensions in $mod(A)$, (2) is equivalent to (3). In particular, computing Ext-groups in $Im(F)$ coincides with computing Ext-groups in $mod(A)$ and, thus, we obtain back (1). Furthermore, observe that the diagram of categories in (4) can only commute, if $F(B)$ is projective. Hence, (4) implies (2). Now assume that (1)-(3) hold. We have to show (4). We define the functor $F_{\Delta}$ to be $F$ on objects and to map a morphism $\pi_B(g)\in\underline{mod}(B)$ to $\pi_A(F(g))\in \underline{mod}(A)$. First of all, $F_\Delta$ is a well-defined functor, since, by (2), every morphism of $B$-modules that factors through a projective $B$-module is mapped to a morphism of $A$-modules that factors through a projective $A$-module in $add(F(B))$. Moreover, $F_\Delta$ is full, since so is $F$. Next, we check that $F_\Delta$ is faithful. Let $F(g):F(X)\ra F(Y)$ be a morphism in $mod(A)$ that factors through a projective $A$-module $P$
$$\xymatrix{F(X)\ar[rr]^{F(g)}\ar[dr] & & F(Y)\\ & P.\ar[ur]_h &}$$
Then the map $h$ factors through the minimal left $Im(F)$-approximation $P\ra F(C_P)$ for some $C_P$ in $mod(B)$. Note that the minimality of the approximation implies that $C_P$ is a projective $B$-module. Consequently, the map $g$ is zero in $\underline{mod}(B)$. Finally, $F_\Delta$ is a triangle functor, since, by (2) and (3), $F$ induces a natural isomorphism $\Omega_B\cong\Omega_A$.

It remains to check that also $B$ fulfils the Tachikawa conjecture. Consider a $B$-module $X$ with $Ext_B^i(X,X)=0$ for all $i>0$. Since $F$ is homological and $A$ fulfils the Tachikawa conjecture, $F(X)$ is a projective $A$-module and, thus, $X$ must be a projective $B$-module.
\end{proof}

In general, it seems that self-injective algebras do not admit many homological embeddings. The next lemma points in this direction. Recall that a two-sided idempotent ideal $I$ of $A$ is called \textbf{stratifying}, if restriction induces a homological embedding $mod(A/I)\ra mod(A)$.

\begin{lemma}
Let $A$ be a connected self-injective algebra fulfilling the Tachikawa conjecture and let $F:mod(B)\ra mod(A)$ be a homological embedding such that $Im(F)$ is closed for quotients. Then $F$ is either zero or an equivalence.
In particular, all stratifying ideals of $A$ are trivial.
\end{lemma}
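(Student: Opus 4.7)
The plan is to combine Remark \ref{rk1}(3) and Proposition \ref{4.Prop2} with the Frobenius structure available on basic self-injective algebras in order to show that the idempotent cutting out $Im(F)$ must be central. Since $F$ is homological and $Im(F)$ is closed for quotients, Remark \ref{rk1}(3) implies that $F$ is equivalent to the restriction functor $mod(A/I)\ra mod(A)$ for some two-sided idempotent ideal $I\lhd A$. Because $A$ fulfils the Tachikawa conjecture, Proposition \ref{4.Prop2} additionally tells us that $A/I$ is projective as a left $A$-module. Consequently, the short exact sequence $0\ra I\ra A\ra A/I\ra 0$ splits and yields a decomposition $A=I\oplus Ae$ of left $A$-modules for some idempotent $e\in A$, with $I=A(1-e)$; moreover, the two-sidedness of $I$ is equivalent to the relation $(1-e)Ae=0$.

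Next, I reduce to the case where $A$ is basic, since being self-injective, being connected and fulfilling the Tachikawa conjecture are all Morita invariants, as are the hypothesis and the conclusion on $F$. For basic self-injective $A$, the algebra is Frobenius, so there is a non-degenerate bilinear form $\phi:A\times A\ra\Kbb$ with $\phi(ab,c)=\phi(a,bc)$. Computing the right $\phi$-orthogonal of the left ideal $Ae$ yields
\[(Ae)^\perp=\{y\in A:\phi(Ae,y)=0\}=\{y\in A:ey=0\}=(1-e)A,\]
so non-degeneracy of $\phi$ gives $\dim_{\Kbb}(1-e)A=\dim_{\Kbb}A-\dim_{\Kbb}Ae=\dim_{\Kbb}A(1-e)=\dim_{\Kbb}I$. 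Since $I$ is a two-sided ideal containing $1-e$, we have $(1-e)A\subseteq I$, and the equality of dimensions forces $(1-e)A=I=A(1-e)$. In particular, $eA(1-e)=e\cdot(1-e)A=0$.

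Having both $(1-e)Ae=0$ and $eA(1-e)=0$, one verifies that $ae=eae=ea$ for every $a\in A$, so $e$ is a central idempotent. Connectedness of $A$ then forces $e\in\{0,1\}$: if $e=0$ then $I=A$ and $F=0$, while if $e=1$ then $I=0$ and $F$ is the identity restriction functor on $mod(A)$, hence an equivalence. The statement on stratifying ideals is now immediate, since any stratifying ideal $I\lhd A$ yields by definition a homological embedding $mod(A/I)\ra mod(A)$ whose image $\{M\in mod(A):IM=0\}$ is clearly closed for quotients. The main obstacle is the Frobenius dimension argument in the second paragraph, which is where self-injectivity (beyond what is already encoded in Proposition \ref{4.Prop2}) is used crucially; the reduction to the basic case before invoking the Frobenius form is essential here.
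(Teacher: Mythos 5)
Your proof is correct, but it takes a genuinely different route from the paper's. Both arguments start the same way: via Remark~\ref{rk1}(3) one reduces to a restriction functor $mod(A/I)\ra mod(A)$ along an idempotent ideal $I$, and via Proposition~\ref{4.Prop2} one obtains the splitting $A\cong I\oplus A/I$ of left $A$-modules. At this point the paper stays on the module/quiver level: it observes that $Hom_A(I,A/I)=0$ (because $I$ is idempotent and annihilates $A/I$) and then invokes the characteristic fact that for a self-injective algebra every arrow of the Ext-quiver lies on an oriented cycle, so absence of paths in one direction between the two "blocks" of vertices forces absence in the other direction, yielding $Hom_A(A/I,I)=0$ and hence an algebra decomposition contradicting connectedness. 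You instead fix an idempotent $e$ with $I=A(1-e)$, reduce to the basic case (where self-injective implies Frobenius), and exploit the non-degenerate associative form $\phi$: the computation $(Ae)^\perp=(1-e)A$ plus the dimension count forces $(1-e)A=A(1-e)=I$, whence $eA(1-e)=0$ in addition to $(1-e)Ae=0$, so $e$ is central and connectedness finishes. Your route is more explicitly algebraic and makes the centrality of $e$ transparent, at the modest cost of a Morita reduction to the basic case; the paper's quiver argument avoids that reduction but is somewhat more informal about why the cycle condition kills the reverse homomorphism space. Both establish the same key fact — the splitting idempotent is central — and the deduction of the statement on stratifying ideals is identical.
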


\begin{proof}
By Remark \ref{rk1}(3), the functor $F$ is equivalent to the restriction functor $mod(A/I)\ra mod(A)$ for some stratifying ideal $I\lhd A$. Let us assume that $F$ is not zero, i.e., $I\not= A$. By Proposition \ref{4.Prop2}, the $A$-module $A/I$ is projective, and we get a decomposition of the regular module $A$ into $I\oplus A/I$. Since the algebra $A$ is self-injective (every arrow of the underlying quiver is part of an oriented cycle) and $Hom_A(I,A/I)=0$, it follows that also $Hom_A(A/I,I)=0$. Consequently, $A$ splits as an algebra into the direct product of $End_A(I)$ and $A/I$. By assumption, this implies that $I$ is zero and, thus, $F$ is an equivalence.
\end{proof}

\begin{remark}\label{rem}
Note that the lack of stratifying ideals for an algebra $A$ or, more generally, the absence of homological embeddings $mod(B)\ra mod(A)$ is related to the notion of derived simplicity. Recall that $A$ is said to be \textbf{derived simple}, if its derived category does not appear as the middle term of a non-trivial recollement of derived module categories (see \cite{AKLY} for details). It is well-known that every stratifying ideal $I=AeA$ of $A$ with $e=e^2\in A$ induces a recollement
$$\xymatrix{\Dcal(A/AeA)\ar[r]^{}& \Dcal(A)\ar@<1.5ex>[l]_{}\ar@<-1.5ex>[l]_{}\ar[r]^{}&
\Dcal(eAe)\ar@<1.5ex>_{}[l]\ar@<-1.5ex>_{}[l]}$$
where $\Dcal(A)$ denotes the derived category of all (not necessarily finite dimensional) left $A$-modules.
In particular, an algebra $A$ with a non-trivial stratifying ideal is not derived simple. More generally, most examples of recollements of derived module categories having $\Dcal(A)$ as middle term do arise from homological embeddings $F:mod(B)\ra mod(A)$ by first extending $F$ to a functor between the category of all $B$-modules and the category of all $A$-modules and then deriving it. 
\end{remark}

In \cite{LY}, it was shown that blocks of group algebras are derived simple. Motivated by this result, we provide a classification of the homological embeddings for such algebras.
Recall that an algebra $A$ is called \textbf{symmetric}, if $A$ is isomorphic to $Hom_{\Kbb}(A,\Kbb)$ as an $A\mbox{-}A$-bimodule and $A$ is called \textbf{weakly symmetric}, if $top(P)\cong soc(P)$ for all indecomposable projective modules $P$.
Note that symmetric algebras are weakly symmetric and weakly symmetric algebras are self-injective.

\begin{theorem}\label{Main symmetric}
Let $A$ be a connected weakly symmetric algebra fulfilling the Tachikawa conjecture (e.g., take $A$ to be the block of a group algebra) and let $F:mod(B)\ra mod(A)$ be a homological embedding. Then $F$ is either zero or an equivalence.
\end{theorem}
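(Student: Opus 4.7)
The plan is to show that any non-zero homological embedding $F$ must satisfy $Im(F) = mod(A)$, so that $F$ is an equivalence. The strategy combines the structural consequences of Proposition \ref{4.Prop2} with the weakly-symmetric hypothesis that $top(P)\cong soc(P)$ for every indecomposable projective $P$.

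First, by Proposition \ref{4.Prop2}, $F(B)$ is a projective $A$-module and $Im(F)$ is closed under $\Omega_A^s$ for all $s\in\Z$. If $F(B)=0$ then $F=0$, so I pick an indecomposable projective summand $P_i$ of $F(B)$, which lies in $Im(F)$. Using weak symmetry, the composition
$$f:P_i\twoheadrightarrow top(P_i)=S_i\cong soc(P_i)\hookrightarrow P_i$$
is a nonzero endomorphism of $P_i$ with kernel $rad(P_i)$ and image $S_i$. Since $Im(F)$ is a wide subcategory, $rad(P_i)$, $S_i$, and (using $\Omega_A^{-1}(S_i)=P_i/S_i$ from $I(S_i)=P_i$) also $P_i/S_i$ all lie in $Im(F)$.

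Next, I observe that $Im(F)$ is in fact closed under projective covers and injective envelopes: for $M\in Im(F)$, the short exact sequence $0\to\Omega_A M\to P(M)\to M\to 0$ has both ends in $Im(F)$ by syzygy-closure, and $Im(F)$ is closed under extensions, giving $P(M)\in Im(F)$; the dual argument handles $I(M)$. Applied to $rad(P_i)$ and $P_i/S_i$, this yields $P(rad(P_i))=\bigoplus_v P_v^{n_{iv}}\in Im(F)$ and $I(P_i/S_i)=\bigoplus_v P_v^{n_{vi}}\in Im(F)$, where $n_{iv}$ (resp.\ $n_{vi}$) counts the arrows $i\to v$ (resp.\ $v\to i$) in the Gabriel quiver of $A$ (using $I(S_v)=P_v$ from weak symmetry). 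Taking Krull--Schmidt summands, $P_v\in Im(F)$ for every vertex $v$ adjacent to $i$.

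By connectedness of $A$, iterating the adjacency argument yields $P_v\in Im(F)$ for every vertex $v$, so $add(A)\subseteq Im(F)$. Since every $A$-module is the cokernel of a morphism in $add(A)$ and $Im(F)$ is closed under cokernels, this forces $Im(F)=mod(A)$, so $F$ is an equivalence. The main obstacle to anticipate is the closure of $Im(F)$ under projective covers and injective envelopes, which is not stated literally in Proposition \ref{4.Prop2}, but follows from combining the syzygy-closure of condition (3) with extension-closure of wide subcategories.
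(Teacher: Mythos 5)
Your proof is correct and follows essentially the same route as the paper: Proposition \ref{4.Prop2} puts an indecomposable projective $P_i$ in $Im(F)$, the weakly symmetric top-to-socle endomorphism puts $rad(P_i)$ there, closure under projective covers (which the paper also extracts from Proposition \ref{4.Prop2}) yields the neighbouring projectives, and connectedness finishes. The only (harmless) deviation is that you additionally propagate against the arrows via injective envelopes of $P_i/S_i$, so connectedness of the underlying graph suffices, whereas the paper propagates only along arrows and invokes the fact that in a connected self-injective algebra every arrow lies on an oriented cycle.
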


\begin{proof}
Without loss of generality, we can assume that $A$ is basic and, thus, given as a bound path algebra $\Kbb Q/I$. If we assume $F$ to be non-zero, by Proposition \ref{4.Prop2}, there is an indecomposable projective $A$-module $P$ that lies in $Im(F)$. Since $A$ is weakly symmetric, via a top-to-socle factorisation, we conclude that also $rad(P)$ belongs to $Im(F)$. Say $P$ is given by $P_i$ for $i\in Q_0$. Again by Proposition \ref{4.Prop2}, the projective $A$-cover of $rad(P_i)$ lies in $Im(F)$ and, thus, so do all the $P_j$ for $i\ra j$ being an arrow in $Q_1$. Now we repeat the argument with all such $P_j$. Since $A$ is connected and self-injective (every arrow in $Q_1$ is part of an oriented cycle), after finitely many steps, we conclude that $Im(F)$ contains the regular module $A$. Thus, $F$ is an equivalence.
\end{proof}

Note that, contrary to the case of group algebras, in general, derived simple self-injective algebras can admit many non-trivial homological embeddings as discussed in the following example.

\begin{example}
Let $Q$ be an oriented cycle with $n>1$ vertices and consider the self-injective Nakayama algebra $A=\Kbb Q/R^h$ with $n\mid h$ and where $R$ denotes the arrow ideal of $\Kbb Q$. It is not hard to check that for every bounded complex $C^\bullet$ of indecomposable projective $A$-modules
$$\xymatrix{...\ar[r] & 0\ar[r] & P^{-m}\ar[r] & ...\ar[r] & P^{-1}\ar[r]^d & P^0\ar[r] & 0\ar[r] & ...}$$
with $d\not= 0$ and $d$ not an isomorphism, we have $Hom_{\Dcal(A)}(C^\bullet,C^\bullet[-1])\not= 0$. A non-trivial map $C^\bullet\ra C^\bullet[-1]$ in $\Dcal(A)$ can be constructed using the isomorphism of $A$-modules $ker(d)\cong coker(d)$. 
Thus, every compact object $C^\bullet$ in $\Dcal(A)$ with $Hom_{\Dcal(A)}(C^\bullet,C^\bullet[i])=0$ for all $i\not= 0$ is of the form
$$\xymatrix{...\ar[r] & 0\ar[r] & P\ar[r] & 0\ar[r] & ...}$$
where $P$ denotes a projective $A$-module.
Using \cite[Proposition 4.1]{LY} and \cite[Lemma 2.9]{AKLY}, it follows that $A$ is derived simple.

On the other hand, there are many different equivalence classes of non-zero homological embeddings $mod(B)\ra mod(A)$ (see \cite[Theorem 5.14]{M}). For example, if $n=h$, their number is given by $2^n-1$. For a concrete example take $n=h=3$. Then $A$ is defined by the quiver
$$\xymatrix{1\ar[rr]^\alpha & & 2\ar[dl]^\beta\\ & 3\ar[ul]^\gamma &}$$
with respect to the relations $\gamma\beta\alpha=\beta\alpha\gamma=\alpha\gamma\beta=0$. Note that there are nine different isomorphism classes of indecomposable $A$-modules. The non-zero homological embeddings are, up to equivalence, determined by the following wide subcategories of $mod(A)$: 
$$add(P_1), add(P_2), add(P_3),$$ 
$$add(P_1\oplus P_2 \oplus S_1\oplus P_2/rad^2(P_2)),$$ 
$$add(P_1\oplus P_3 \oplus S_3\oplus P_1/rad^2(P_1)),$$ 
$$add(P_2\oplus P_3 \oplus S_2\oplus P_3/rad^2(P_3)),$$
$$mod(A).$$
\end{example}

%%%%%%%%%%%%%%%%%%%%%%%%%%%%%%%%%%%%%%%

\section{Homological embeddings for preprojective algebras}

We are now able to complement Theorem \ref{Main 1} of Section 4 by classifying the homological embeddings for preprojective algebras of Dynkin type. First, recall that for every algebra there are two equivalence classes of trivial homological embeddings, namely those for which the given functor is zero or an equivalence. If $A$ is a preprojective algebra of Dynkin type, these classes correspond, respectively, to the longest element and the identity element of the associated Weyl group.

\begin{example}\label{ex hom preproj}
Let $A$ be a preprojective algebra of type $\Acal_n$ $({\small{\xymatrix{1\ar@{-}[r] & 2\ar@{-}[r] & \dots\ar@{-}[r] & n}}})$ for $n\ge 2$ and let $P$ be the indecomposable projective $A$-module associated to vertex 1. In particular, we have $End_A(P)\cong\Kbb$. Consequently, $add(P)$ is a functorially finite and wide subcategory of $mod(A)$ and the associated weakly homological embedding $F:mod(M_n(\Kbb))\ra mod(A)$ is homological (see Proposition \ref{Prop ring epi} and Proposition \ref{4.Prop2}). Moreover, following Section 4, there is a torsion class in $mod(A)$ corresponding to $F$ (the smallest torsion class containing $P$) which is given by $gen(I_w)$ for $$w:=s_n(s_{n-1}s_n)\cdots (s_2s_3\cdots s_n).$$ To see this, first note that $P$ is a direct summand of the $A$-module $I_w$. Conversely, it suffices to observe that $w$ is the longest Weyl group element that can be written as a product of simple reflections excluding $s_1$. It follows that the homological embedding $F$ corresponds to the Weyl group element $w$ under the bijection in Theorem \ref{Main 1}. Analogously, when starting with the indecomposable projective $A$-module associated to vertex $n$, one obtains a further homological embedding that corresponds to the Weyl group element $s_{n-1}(s_{n-2}s_{n-1})\cdots (s_1s_2\cdots s_{n-1})$.
\end{example}

Next, we classify all homological embeddings for preprojective algebras of Dynkin type.

\begin{theorem}\label{4.Thm.2}
Let $A$ be a preprojective algebra of Dynkin type and $F:mod(B)\ra mod(A)$ be a homological embedding that is neither zero nor an equivalence. Then $A$ must be of type $\Acal_n$ $({\small{\xymatrix{1\ar@{-}[r] & 2\ar@{-}[r] & \dots\ar@{-}[r] & n}}})$ for $n\ge 2$ and the algebra $B$ is Morita equivalent to $\Kbb$\,.
In fact, for each $n\ge 2$ there are precisely two such choices for $F$, up to equivalence, which correspond to the Weyl group elements $s_{n-1}(s_{n-2}s_{n-1})\cdots (s_1s_2\cdots s_{n-1})$ and $s_n(s_{n-1}s_n)\cdots (s_2s_3\cdots s_n)$, respectively.
\end{theorem}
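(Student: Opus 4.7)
The plan is to combine Proposition~\ref{4.Prop2} with the parametrisation from Theorem~\ref{Main 1}, and then to analyse structurally the functorially finite wide subcategories of $mod(A)$ that are closed under syzygies. Since $A$ is a preprojective algebra of Dynkin type, the stable category $\underline{mod}(A)$ is $2$-Calabi--Yau (as recalled in Section~2), so $A$ is periodic and thereby fulfils the Tachikawa conjecture by Lemma~\ref{4.Lem2}(1). Hence Proposition~\ref{4.Prop2} applies, and a homological embedding $F:mod(B)\to mod(A)$ satisfies that $F(B)$ is a projective $A$-module and $Im(F)$ is closed under $\Omega_A^{\pm 1}$; by Theorem~\ref{Main 1}, $F$ corresponds to some $w\in W_Q$.

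The key structural claim is that any functorially finite wide subcategory $\mathcal{W}\subseteq mod(A)$ closed under syzygies is either $\{0\}$, all of $mod(A)$, or of the form $add(P_i)$ for some indecomposable projective $P_i$ with $End_A(P_i)\cong\Kbb$. I would first argue that if $\mathcal{W}$ contains an indecomposable non-projective module $X$ then $\mathcal{W}=mod(A)$: by periodicity and syzygy-closure the whole (finite) $\Omega_A$-orbit of $X$ sits inside $\mathcal{W}$; applying extension-closure to each projective cover sequence $0\to\Omega Y\to P_Y\to Y\to 0$ for $Y$ in the orbit places every $P_Y$ in $\mathcal{W}$. Using the connectedness of the quiver of $A$ and the sincerity of each indecomposable projective $A$-module, one shows that the tops of the modules in the orbit must exhaust all simple $A$-modules, so $\mathcal{W}$ contains every indecomposable projective of $A$; closure of $\mathcal{W}$ under cokernels of morphisms between direct sums of indecomposable projectives then yields $\mathcal{W}=mod(A)$, as every $A$-module admits a projective presentation.

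Consequently a non-trivial such $\mathcal{W}$ lies entirely in $proj(A)$, say $\mathcal{W}=add(P)$ for a non-zero projective $P$. To see that $P$ must be indecomposable with $End_A(P)\cong\Kbb$: if $P$ had two distinct indecomposable summands $P_i,P_j$, sincerity of the preprojective algebra provides a non-zero morphism $f:P_i\to P_j$ which cannot be surjective, since otherwise projectivity of $P_j$ would split the sequence and force $P_i\cong P_j$; hence $\operatorname{im}(f)$ is a proper non-zero submodule of the indecomposable projective $P_j$, therefore non-projective, and by self-injectivity of $A$ a standard argument then yields that $\ker(f)$ is also non-projective, contradicting $\ker(f)\in add(P)\subseteq proj(A)$. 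If in addition $\dim_\Kbb End_A(P)\ge 2$, a non-invertible endomorphism of $P$ (which exists because $End_A(P)$ is local) would have a non-trivial proper kernel in $\mathcal{W}$, again non-projective by indecomposability of $P$.

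The remaining step is to determine, for each Dynkin type $Q$, the vertices $i$ with $End_A(P_i)=e_iA_Qe_i\cong\Kbb$. A direct calculation with the preprojective relations shows that this happens precisely when $Q$ is of type $\Acal_n$ with $n\ge 2$ and $i\in\{1,n\}$: for these endpoints the relation at $i$ kills the unique length-two loop at $i$, and an induction using the mesh relations along the single branch emanating from $i$ shows that all longer loops from $i$ to itself vanish as well; conversely, for every other vertex of every Dynkin type a non-trivial loop at $i$ survives the relations, either of length $2$ (when $i$ has at least two neighbours) or of length $\ge 4$ passing through a branching vertex (at a leaf of $\Dcal_n$, $\Ecal_6$, $\Ecal_7$, $\Ecal_8$). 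Combining this classification with the identification of the two corresponding Weyl group elements already given in Example~\ref{ex hom preproj} completes the argument. The main obstacle in the plan is the sincerity argument in the second step, namely that the tops of modules in an arbitrary $\Omega_A$-orbit exhaust all simples; this requires careful use of the interplay between the preprojective relations and the connectedness of the underlying Dynkin quiver.
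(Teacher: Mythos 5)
Your overall strategy coincides with the paper's: reduce to the Tachikawa conjecture via periodicity, invoke Proposition~\ref{4.Prop2} to see that $Im(F)$ is a functorially finite wide subcategory closed under syzygies, show that such a subcategory is either trivial or of the form $add(P)$ for an indecomposable projective $P$ with $End_A(P)\cong\Kbb$, and finally read off which Dynkin types admit such a $P$. The reduction steps, the argument that a non-trivial such subcategory contained in $proj(A)$ must be $add(P)$ with $P$ indecomposable and $End_A(P)\cong\Kbb$, and the identification with the endpoints of $\Acal_n$ via the preprojective relations are all essentially correct and match the paper.

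However, there is a genuine gap in the pivotal step, and you flag it yourself at the end. You want to show that if $\mathcal{W}$ contains a non-projective indecomposable $X$ then $\mathcal{W}=mod(A)$, and to that end you claim that the tops of the modules in the $\Omega$-orbit of $X$ exhaust all simple $A$-modules ``by sincerity and connectedness.'' Sincerity of the indecomposable projectives says each $P_i$ contains every simple as a composition factor; it does not control which simples occur in the \emph{tops} of the syzygies $\Omega^s(X)$, and no inductive mechanism is supplied to propagate from one vertex to its neighbours. This is exactly where the paper's proof diverges: it uses the $2$-Calabi--Yau structure of $\underline{mod}(A)$ (so $\nu\cong\Omega^{-3}$) to place $\nu(X)$, and hence the Nakayama shift $\nu(P_i)$ of any indecomposable projective summand $P_i$ of $P^X$, into $Im(F)$. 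The natural map $P_i\to\nu(P_i)$ then yields $rad(P_i)$ and $top(P_i)$ as kernel and image inside $Im(F)$, and the projective cover of $rad(P_i)$ is $\bigoplus_{\alpha:i\to j\text{ in }\overline{Q}_1}P_j$, which pulls all neighbouring projectives into $Im(F)$. Iterating this argument along the connected double quiver $\overline{Q}$ gives the regular module and hence $Im(F)=mod(A)$. To close the gap in your proposal, you should replace the sincerity claim about $\Omega$-orbits by this Nakayama/radical propagation step; without it, the claim that all projectives enter $\mathcal{W}$ does not follow.
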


\begin{proof}
First of all, recall that $A$ is a periodic algebra (see \cite[IV., Theorem 14.1]{SY}) and, hence, $A$ fulfils the Tachikawa conjecture, by Lemma \ref{4.Lem2}. Now let $F:mod(B)\ra mod(A)$ be a homological embedding that is neither zero nor an equivalence. Further, suppose that $Im(F)$ contains a non-projective indecomposable $A$-module $X$. Consequently, by Proposition \ref{4.Prop2}, $Im(F)$ contains $\Omega^s(X)$ for all integers $s$ and, thus, using that $\underline{mod}(A)$ is 2-Calabi-Yau, it also contains $\nu(X)$, where $\nu$ denotes the Nakayama functor. In particular, the projective $A$-covers $P^X$ of $X$ and $P^{\nu(X)}=\nu(P^X)$ of $\nu(X)$ belong to $Im(F)$. Now let $P=P_i$ for $i\in Q_0$ be an indecomposable direct summand of $P^X$. Consider the following short exact sequence of $A$-modules
$$\xymatrix{0\ar[r] & rad(P)\ar[r] & P\ar[r] & top(P)=soc(\nu(P))\ar[r] & 0.}$$
Since $P$ and $\nu(P)$ belong to $Im(F)$, so does $rad(P)$, as the kernel of the induced map from $P$ to $\nu(P)$, and $top(P)$, as the image of this map. Note that $rad(P)$ cannot have any projective direct summand, since the algebra $A$ is self-injective. It follows that also the projective $A$-cover of $rad(P)$
$$P^{rad(P)}=\underset{\underset{\alpha:i\ra j}{\alpha\in \overline{Q}_1}}{\bigoplus}P_j$$
and, using the same arguments as above, its Nakayama shift $\nu(P^{rad(P)})$ belong to $Im(F)$. Keeping in mind the shape of the underlying quiver $\overline{Q}$, after finitely many steps, we conclude that $Im(F)$ contains the regular module $A$. Therefore, $F$ must be an equivalence, contradicting our assumption. It follows that $Im(F)$ can only contain projective $A$-modules. Since $Im(F)$ is closed for kernels and cokernels in $mod(A)$ and $Hom_A(P,Q)\not= 0$ for all indecomposable projective $A$-modules $P$ and $Q$, the essential image of $F$ must be of the form $add(P)$ for $P$ projective with $End_A(P)\cong\Kbb$. But such projectives do only exist in case $A$ is of type $\Acal_n$. In fact, for each $n\ge 2$ there are precisely two indecomposable projective $A$-modules with trivial endomorphism ring which correspond to the outer vertices of $\Acal_n$. Now we can refer to Example \ref{ex hom preproj} to complete the proof.
\end{proof}

%%%%%%%%%%%%%%%%%%%%%%%%%%%%%%%%%%%%%%

\end{document}